\newtheorem{theorem}{Theorem}[section]
\newtheorem{lemma}[theorem]{Lemma}
\newtheorem{exmp}[theorem]{Example}
\newtheorem{definition}{Definition}
\newtheorem{remark}{Remark}
\newtheorem{corollary}{Corollary}
\newtheorem{question}{Question}
\title{Effective computation of degree bounded minimal models of GCDA's}
\author{Victor Manero\\
Miguel \'Angel Marco Buzun\'ariz
}
\thanks{First author is partially supported by MTM2017-85649-P (AEI/Feder, UE) and ``\'Algebra y Geometr\'ia'' (Gobierno de Arag\'on/FEDER).
Second author is partially supported by MTM2016-76868-C2-2-P and Grupo ``Investigaci\'on en Educaci\'on Matem\'atica'' of Gobierno de Arag\'on/Fondo Social Europeo.}
\begin{document}

\begin{abstract}
Given a finitely presented Graded Commutative Differential Algebra (GCDA),
we present a method to compute its minimal model, together with a map that is a quasi-isomorphism up to a given degree. The method works by adding generators one by one. We also provide a specific implementation of the
method.
We also provide two criteria for $i$-formality, one necessary and one sufficient.
\end{abstract}

\maketitle

\section{Introduction}

\begin{definition}
\label{dfn:cdga}
 A \emph{Graded Commutative Differential Algebra} (or GCDA) is a graded algebra $\displaystyle{A=\bigoplus_{i=0}^\infty}A_i$, together with a
 linear map $d_A:A\rightarrow A$ that satisfies the following conditions:
 \begin{itemize}
  \item $A_iA_j \subseteq A_{i+j}$
  \item $d_A^2= 0$
  \item $d_A(A_i)\subseteq A_{i+1} \hspace{20pt} \forall i\in\mathbb{N}$
  \item $ab=(-1)^{ij} ba \hspace{20pt} \forall a\in A_i, b\in A_j$
  \item $d_A(ab) = d_A(a)b+(-1)^iad_A(b) \hspace{20pt} \forall a\in A_i, b\in A$
 \end{itemize}
\end{definition}
From now on we will refer to a GCDA as differential algebra and when no confusion could be produced we will denote the pair $(A, d_A)$ simply by the name of their corresponding algebra $A$. To simplify more the notation, we will assume that the base field is $\mathbb{Q}$, although the results will hold for more general fields. We will also assume that $A_0=\mathbb{Q}$ what is called in the related literature to be connected.

We will consider finitely presented differential algebras, given by the following data:
\begin{itemize}
 \item A finite set of homogenous \emph{generators} $\{a_1,\ldots,a_n\}$
 \item For each generator $a_j$, a positive integer that will be its \emph{degree} which is usually denoted as $|a_j|$.
 \item For each generator of degree $i$, its differential, which is either zero or a homogeneous supercommutative polynomial in the generators, of degree $i+1$.
 \item A finite set of homogeneous relations $\{R_1,\ldots, R_m\}$, which are supercommutative polynomials in the generators.
\end{itemize}

Given this data, $A$ is the quotient of the ring of supercommutative polynomials in $a_1,\ldots,a_n$ by the twosided ideal $I$ generated by the relations $(R_1,\ldots,R_m)$. A Gr\"obner basis for $I$ allows us to have
a normal form (and hence, a unique representation) for the elements in $A$.
The sum, product and differential can be computed by using the properties of Definition~\ref{dfn:cdga}.


There exist very well known examples of differential algebras like the De Rham complex $(\Omega^*(N),d_N)$  of differential forms on a manifold $N$ with the differential $d_N$ or the De Rham cohomology algebra $(H^*(N),d=0)$ with null differential. Notice that, more in general, every differential algebra $(A,d_A)$  has associated another differential algebra which is exactly the algebra given by its cohomology with null differential, i.e. $\big(H^{\ast}(A),d=0\big)$.

\begin{definition}
\label{dfn:minimal}
A differential algebra $A$ is said to be \emph{$i$-minimal} (in the sense of Sullivan) if it is freely generated by a collection of elements $\{a_l\}$ with $l \in J$, of $V$, for a well-ordered index set $J$, such that $|a_l|\leq|a_s|\leq i$ if $l < s$ and the differential of a generator $a_s$ is expressed in terms of the preceding $a_l$ with $l < s$.
\end{definition}
Consider $A$ and $B$ differential algebras, a morphism of algebras
$$\phi:A \rightarrow B$$
is said to be a morphism of differential algebras if it preserves the degree and commutes with the differential. Those differential algebras are said to be \emph{i-quasi-isomorphic} if there exists a morphism of differential algebras $\phi: A \longrightarrow B$ such that $\phi^{\ast}: H^j(A) \longrightarrow H^j(B)$ is an isomorphism for every $j \leq i$ and  $\phi^{\ast}: H^{i+1}(A) \longrightarrow H^{i+1}(B)$ is a monomorphism.

\begin{definition}\label{def3}
A differential algebra $(M, d_M)$ is an  $i$-minimal model of the differential algebra $(A, d_A)$ if $(M, d_M)$ is $i$-minimal and there exists an $i$-quasi-isomorphism
\begin{equation*}
\phi: (M,d_M) \rightarrow (A,d_A).
 \end{equation*}

\end{definition}

The existence and uniqueness of the minimal model of a connected differential algebra is guaranteed by the following result due to Halperin.
\begin{theorem}\cite{Morgan, Sullivan}
Every differential algebra $A$ which is connected (i.e $H^0(A)=\mathbb{Q}$) has a unique $i$-minimal model $M$ up to isomorphism for every $i\geq 0$. 
\end{theorem}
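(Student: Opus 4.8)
The plan is to construct the $i$-minimal model $M$ of $A$ by an inductive, generator-adding procedure — precisely the one the paper will later make effective — and then to establish uniqueness by a lifting argument. The construction builds an increasing sequence of $i$-minimal differential algebras $M(0) \subseteq M(1) \subseteq \cdots$ together with compatible morphisms $\phi_k : M(k) \to A$, where at each stage $M(k)$ is generated by the generators added so far, ordered by degree. The invariant maintained is that $\phi_k^* : H^j(M(k)) \to H^j(A)$ is an isomorphism for all $j < k$ and a monomorphism for $j = k$ (so $M(k)$ is an $(k-1)$-quasi-isomorphic approximation). One starts with $M(0) = \mathbb{Q}$, which works because $A$ is connected.

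For the inductive step, suppose $\phi_k : M(k) \to A$ has been built. First I would correct surjectivity in degree $k$: choose cocycles in $A_k$ whose classes generate the cokernel of $\phi_k^* : H^k(M(k)) \to H^k(A)$, adjoin one new generator of degree $k$ for each, with zero differential, and extend $\phi_k$ by sending each new generator to the corresponding cocycle. Call the result $M'$; now $\phi'^* : H^j(M') \to H^j(A)$ is an isomorphism for $j < k$ and surjective for $j = k$. Next I would kill the kernel in degree $k+1$: for each element of $\ker\bigl(H^{k}(M') \to H^{k}(A)\bigr)$ wait — rather, for each generator of the kernel of $\phi'^*$ in degree $k$ together with each class in $H^{k+1}(M')$ mapping to zero, adjoin a new generator of degree $k$ whose differential is a cocycle representing that kernel class, and define its image in $A$ to be any element of $A_k$ whose differential equals the image (under $\phi'$) of that cocycle — such an element exists precisely because the cocycle maps to a coboundary. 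Iterating the surjectivity and injectivity corrections (which in each fixed degree terminates, using that $A$ and the $M$'s are of finite type in each degree) produces $M(k+1)$ with the sharpened invariant. Taking $M = M(i{+}1)$, or rather the union stabilized at degree $i$, yields an $i$-minimal algebra with an $i$-quasi-isomorphism to $A$; freeness and the ordering of generators by degree are automatic from the construction, and $d_M$ of each generator lies in the subalgebra generated by earlier ones by construction, so $M$ is $i$-minimal in the sense of Definition~\ref{dfn:minimal}.

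For uniqueness, suppose $M$ and $N$ are both $i$-minimal models with quasi-isomorphisms $\phi : M \to A$ and $\psi : N \to A$. The standard approach is to build a morphism $f : M \to N$ lifting $\phi$ through $\psi$ up to degree $i$, again by induction on the generators of $M$ ordered by degree: for a generator $a$ of degree $\ell \le i$, one must choose $f(a) \in N_\ell$ with $d_N f(a) = f(d_M a)$ (which determines $f(a)$ up to a cocycle) and $\psi f(a)$ cohomologous to $\phi(a)$; the obstruction to making both choices simultaneously lives in a cohomology group of $N$ in degree $\ell$ or $\ell+1$ that $\psi^*$ identifies with the corresponding group for $A$, so it vanishes. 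One then checks $f$ induces an iso on $H^j$ for $j \le i$, and a map of $i$-minimal algebras inducing a cohomology isomorphism up to degree $i$ is an isomorphism in the relevant range by a dimension/filtration count on the free generators.

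The main obstacle I expect is bookkeeping the termination and the quasi-isomorphism bookkeeping simultaneously: one must be careful that adding generators in degree $k$ to fix $H^k$ does not destroy the already-achieved isomorphism in lower degrees (it does not, since new generators sit in degree $k$ and contribute nothing to $H^{<k}$), and that the injectivity-correction step in degree $k+1$ does not reintroduce a failure of surjectivity in degree $k$ that was already fixed — this is why the two corrections must be interleaved and why one needs the finite-type hypothesis to know the process stabilizes after finitely many rounds in each degree. The lifting/obstruction computation in the uniqueness half is routine homological algebra once the right cohomology groups are identified, but stating the obstruction classes cleanly is the delicate point.
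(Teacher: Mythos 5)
The paper offers no proof of this theorem --- it is cited from Morgan and Sullivan --- but the existence half of your proposal is essentially the construction the paper makes effective in Section~\ref{sec:algorithm}: adjoin closed generators of degree $k$ to achieve surjectivity on $H^k$, then adjoin degree-$k$ generators with prescribed differentials to kill the kernel on $H^{k+1}$, iterating the latter. Your uniqueness sketch via lifting is the standard supplementary argument, which the paper does not touch at all. So the route is the right one; two points, however, need repair.

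First, your claim that the injectivity-correction ``in each fixed degree terminates, using that $A$ and the $M$'s are of finite type in each degree'' is false. Killing a kernel class in $H^{k+1}$ by adjoining a degree-$k$ generator $y$ creates new products involving $y$ that can produce new kernel classes in $H^{k+1}$, and this can continue indefinitely even when $A$ is finitely presented: the paper's example $G^0_{5.14}$ exhibits the degree-$1$ iteration running past any small bound (invariants $v^1_0=2$, $v^1_1=1$, $v^1_2=2$, $v^1_3=3,\ldots$), and for the cohomology of a wedge of circles the $1$-minimal model genuinely requires infinitely many stages. Finite type of $A$ only guarantees that \emph{each} stage adds finitely many generators, not that there are finitely many stages. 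Existence survives because Definition~\ref{dfn:minimal} permits an infinite well-ordered set of generators: one takes the union of the increasing chain of subalgebras, and since every cocycle lives in a finite stage, any kernel class of the colimit is already killed at the next stage, so injectivity holds in the limit while the lower-degree isomorphisms are preserved. You should replace ``terminates'' by this colimit argument (this is also why the paper is careful to say ``\emph{if} the method terminates at degree $i$''). Second, in the uniqueness half, the step ``choose $f(a)$ with $d_Nf(a)=f(d_Ma)$ \emph{and} $\psi f(a)$ cohomologous to $\phi(a)$'' cannot be arranged generator by generator on the nose: $\psi f(a)-\phi(a)$ need not even be a cocycle, since $\psi f$ and $\phi$ only agree up to homotopy on the earlier generators. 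The honest argument requires introducing homotopies of GCDA morphisms and proving the homotopy lifting property of minimal algebras against quasi-isomorphisms; you correctly flag this as the delicate point, but it is the actual content of the uniqueness proof rather than routine bookkeeping.
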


\section{Effective computation of the minimal model}\label{sec:algorithm}
In this section we give a description of an algorithm to compute the $i$-minimal model of a given differential algebra. It follows the usual approach in the literature, but presented as an explicit algorithm.

We aim to construct a differential algebra $M$ that is an $i$-minimal model of $A$, together with
a morphism of differential algebras $\phi:M\rightarrow A$ being an $i$-quasi isomorphism. In order to describe it, we need a set of generators as in Definition~\ref{dfn:minimal}, and for each of them, its degree, differential and image by $\phi$. The method consists on adding these generators sequentially, in such a way that we get the needed conditions. We will use the following notation:

\begin{itemize}
 \item $x^k_j$ will denote a generator of $M$ of degree $k$, whose differential is zero.
 \item $y^k_j$ will denote a generator of $M$ of degree $k$, with nonzero differential.
\end{itemize}

Each time we add a new generator to $M$, we will use the following diagram
\[
\begin{array}{ccccc}
b^k_j & \leftarrow & y^k_j & \rightarrow & z^{k+1}_j
\end{array}
\]
to denote that we have added the generator $y^k_j$, with $d_M(y^k_j)=z^{k+1}_j$ and
$\phi(y^k_j)=b^k_j$.

\subsection{First step}
\label{step:x0}

We start by computing the smallest $k_0$ for which $H^{k_0}(A)$ is not trivial.
Take a basis $[{a}^{k_0}_0],\ldots, [{a}^{k_0}_{l_{k_0}}]$ of $H^{k_0}(A)$, and choose representatives $a^{k_0}_0\ldots,a^{k_0}_{l_{k_0}}$ in $A$. The first generators to add to $M$ are $x^{k_0}_0,\ldots x^{k_0}_{l_{k_0}}$. That is, we start with the diagram

\[
\begin{array}{ccccc}
    a^{k_0}_1 & \leftarrow & x^{k_0}_0 & \rightarrow & 0 \\
& & \vdots & & \\
a^{k_0}_{l_{k_0}} & \leftarrow & x^{k_0}_{l_{k_0}} & \rightarrow & 0
\end{array}
\]

At this point, we have that $\phi:M\rightarrow A$ induces an isomorphism $H^{j}(M)\rightarrow H^{j}(A)$ for all $j\leq n_0$.

\subsection{Increase degree}

Assume that we have already found generators of degree up to $k-1$, such that the map $\phi:M\rightarrow A$ induces isomorphisms $\phi^*_m:H^{m}(M)\rightarrow H^{m}(A)$ for all $m\leq k-1$. In this step, we will add new generators to get also an isomorphism $\phi^*_k:H^k(M)\rightarrow H^k(A)$, without changing the lower degree cohomologies.

This step has two phases. In the first phase, that might need to be run iteratively, we will add generators of the form $y^{k-1}_j$ until the obtained map in $\phi^*_k$ is injective.

Once we have an injective map, we will add generators of type $x^{k}_j$ (that is, with zero differential), which will respect the lower degree differentials, and the injectivity at degree $k$, until we get a surjective map.


\subsubsection{Adding generators of the form $y^{k-1}_j$}
\label{step:y}

Compute the $k$'th cohomology group $H^k(M)$ with the already defined
generators, and the induced map $\phi^*_k:H^k(M)\rightarrow H^k(A)$. If this
map $\phi^*_k$ is not injective, take $[{z}^k_0], \ldots,[{z}^k_{l^1_{k-1}}]$ a basis
of $Ker(\phi^*_k)$. And consider representatives $z^k_0,\ldots z^k_{l^1_{k-1}} \in M$.
Compute $c^k_j = \phi(z^k_j)\in A_k$. Since $[{z}^k_j]$ is in the kernel
of the cohomology map, $c^k_j$ must correspond to a trivial cohomology
class, that is, there must be $b^{k-1}_j\in A_{k-1}$ such that
$d_A(b^{k-1}_j)=c^k_j$. So we add the following generators:

\[
 \begin{array}{ccccc}
b^{k-1}_0 & \leftarrow & y^{k-1}_0 & \rightarrow & z^k_0 \\
& & \vdots \\
b^{k-1}_{l^1_{k-1}} & \leftarrow & y^{k-1}_{l^1_{k-1}} & \rightarrow & z^k_{l^1_{k-1}}
 \end{array}
\]

Note that, after adding these generators, new elements of  $Ker(\phi^*_k)$
could have been added. We can iterate this process if required adding new generators:
\[
 \begin{array}{ccccc}
b^{k-1}_0 & \leftarrow & y^{k-1}_0 & \rightarrow & z^k_0 \\
& & \vdots \\
b^{k-1}_{l^1_{k-1}} & \leftarrow & y^{k-1}_{l^1_{k-1}} & \rightarrow & z^k_{l^1_{k-1}} \\
b^{k-1}_{l^1_{k-1}+1} & \leftarrow & y^{k-1}_{l^1_{k-1}+1} & \rightarrow & z^k_{l^1_{k-1}+1} \\
& & \vdots \\
b^{k-1}_{l^1_{k-1}+l^2_{k-1}}& \leftarrow & y^{k-1}_{l^1_{k-1}+l^2_{k-1}} & \rightarrow & z^k_{l^1_{k-1}+l^2_{k-1}} \\
& & \vdots
 \end{array}
\]

until the map $\phi^*_k$ is injective.

\subsubsection{Adding generators of the form $x^d_i$}
\label{step:xd}

If this map $\phi^*_k$ is not surjective, get a basis $[{a}^k_1],\ldots,[{a}^k_{j}],[{a}^k_{j+1}],\ldots,[{a}^k_{l_k}]$, of $H^k(A)$ where $[{a}^k_{j+1}],\ldots,[{a}^k_{l_k}]$ is a basis of $Im(\phi^*_k)$. As before, take representatives ${a}^k_1,\ldots,{a}^k_{j}$, and construct the new generators

\[
 \begin{array}{ccccc}
a^{k}_1  & \leftarrow &  x^{k}_1 & \rightarrow & 0 \\
& & \vdots\\
a^{k}_{j} & \leftarrow & x^{k}_j & \rightarrow & 0
 \end{array}
\]

We repeat these two steps until $k=i$. Then we repeat step~\ref{step:y} one last time to get injectivity in $\phi^*_{i+1}$.

\begin{exmp}
    \label{exm:algorithm}
    Consider $A$ the algebra generated by six generators of degree $1$, $e_1,\ldots, e_6$ and one generator of degree $2$, $e_7$ with no extra relations, and the differential is given by:
 \begin{align*}
 d_A(e_1)&=-e_1\wedge e_6, \quad d_A(e_2)=-e_2\wedge e_6, \quad  d_A(e_3)=-e_3\wedge e_6, \\
 d_A(e_4)&=-e_5\wedge e_6, \qquad \text{ and } \qquad d_A(e_5)=d_A(e_6)=d_A(e_7)=0.
 \end{align*}

We can compute the first cohomology groups:
\begin{itemize}
\item $H^1(A)=\mathbb{Q}\langle e_5,e_6 \rangle$
\item $H^2(A)=\mathbb{Q}\langle e_4\wedge e_5, e_4\wedge e_6 , e_7 \rangle$
\item $H^3(A)=\mathbb{Q}\langle e_4\wedge e_5 \wedge e_6, e_5\wedge e_7, e_6\wedge e_7 \rangle$
\end{itemize}

So we can start the method at degree $1$:

\[
 \begin{array}{ccccc}
e_6  & \leftarrow & x^{1}_0 &\rightarrow & 0 \\
e_5 & \leftarrow & x^{1}_1 & \rightarrow & 0
 \end{array}
\]

At this point, $H^2(M)$ would be generated by the class of $x^1_0\wedge x^1_1$. But the image by the induced map is $-e_5\wedge e_6$, which is trivial in cohomology, because it is the differential of $e_4$. So we have to add a new generator in
order to make this cohomology class trivial:
\[
\begin{array}{ccccc}
e_4 & \leftarrow & y^{1}_0 & \rightarrow & x^1_0\wedge  x^1_1
\end{array}
\]

Now we get that $d_M(x^1_0\wedge y^1_0)=d_M(x^1_1\wedge y^1_1)=0$, so a basis of $H^2(M)$ is  formed by the classes of $(x^1_0\wedge y^1_0,x^1_1\wedge y^1_0)$. Their images by $\phi$ are precisely $e_4\wedge e_6$ and $e_4\wedge e_5$, which are two elements of the basis of $H^2(A)$. That is, we have already an injective map at the second cohomology level. Proceeding as described in step~\ref{step:xd}; that is, add a new generator that will be mapped to the remaining element of the basis of $H^2(A)$:

\[
\begin{array}{ccccc}
e_7 &  \leftarrow & x^{2}_0 & \rightarrow & 0
\end{array}
\]

Concerning degree $3$, a basis for the cohomology of the already computed $M$ is given by the classes of $x^1_0\wedge x^1_1\wedge y^1_0$, $x^1_0\wedge x^2_0$ and $x^1_1\wedge x^2_0$. Since these elements map precisely to the basis of $H^3(A)$, we already have an isomorphism in degree $3$. Up to degree 3 the minimal model of $A$ is
\begin{equation*}
M_3=\bigwedge(x^1_0,x^1_1, y^1_0) \otimes \bigwedge(x^2_0) \text{ and } d_M(y^1_0)=x^1_0\wedge x^1_1.
\end{equation*}

In fact, in this case we can check that the obtained result is a minimal model for $A$ in any degree.

We can summarize the whole process with the complete diagram:

\begin{equation}
    \label{diagram:ejemplo}
\begin{array}{ccccc}
e_6  & \leftarrow & x^{1}_0 &\rightarrow & 0 \\
e_5 & \leftarrow & x^{1}_1 & \rightarrow & 0 \\
e_4 & \leftarrow & y^{1}_0 & \rightarrow & x^1_0\wedge  x^1_1 \\
e_7 &  \leftarrow & x^{2}_0 & \rightarrow & 0
\end{array}
\end{equation}

\end{exmp}

\section{Proof of correctness}

\begin{lemma}
The algebra $M$ obtained after each step of the previous process is minimal.
\end{lemma}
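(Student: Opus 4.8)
The plan is to verify, straight from the construction, the three requirements of Definition~\ref{dfn:minimal}: that $M$ is a free graded-commutative algebra, that its generators carry a well-ordering along which degrees are non-decreasing (and bounded by $i$), and that the differential of every generator lies in the subalgebra generated by the strictly preceding ones.

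Freeness I would dispose of first: each step of the process adjoins one new generator (of type $x$ or $y$) and imposes no new relation, so at every stage $M$ is free on the finite set of generators adjoined so far. As the well-ordering I take the order in which the generators appear, breaking ties inside a single substep by subscript. The differential condition is trivial for the generators $x^k_j$, whose differential is $0$, so all the real work concerns the $y$-type generators and the degree bookkeeping.

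The bookkeeping is where I expect to spend most of the care. Listing the adjoined generators in order, one finds: the first step (step~\ref{step:x0}) produces only generators of degree $k_0$; the degree-increase step with parameter $k$ produces, in its first phase (step~\ref{step:y}, possibly run iteratively), only generators $y^{k-1}_j$ of degree $k-1$, and then, in its second phase (step~\ref{step:xd}), only generators $x^k_j$ of degree $k$; and the concluding repetition of step~\ref{step:y} at $k=i+1$ produces generators $y^i_j$ of degree $i$. So the degree sequence is: a block of $k_0$'s (the first step), then a block of $k_0$'s (the $y^{k_0}$'s), then a block of $(k_0+1)$'s, then again a block of $(k_0+1)$'s, then a block of $(k_0+2)$'s, and so on up to a final block of $i$'s; this is non-decreasing. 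The only junctures that could a priori break monotonicity are those where the first phase of step $k$ re-enters degree $k-1$ just after the second phase of step $k-1$ — or just after the first step, when $k=k_0+1$ — had already produced generators of degree $k-1$; but $k-1\le k-1$, so no drop occurs, and in particular every generator of $M$ has degree at most $i$.

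It remains to check the differential condition for a generator $y^{k-1}_j$. Its differential is set equal to a chosen representative $z^k_j\in M$ of a class in $H^k(M)$ computed with exactly the generators already present; by the previous paragraph all of those have degree $\le k-1$, and since they were adjoined earlier they all strictly precede $y^{k-1}_j$ in our order — this includes any $y^{k-1}$-generators of the same iterated phase adjoined before it, which is consistent since they share the degree $k-1$ permitted by the definition. Being homogeneous of degree $k\ge 1$, the normal form of $z^k_j$ is a polynomial without constant term in these strictly preceding generators, which is exactly the expression demanded by Definition~\ref{dfn:minimal}; and since $z^k_j$ is a cocycle one has $d_M^2(y^{k-1}_j)=d_M(z^k_j)=0$, so the extension is again a differential algebra. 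The main obstacle is thus not any computation but the discipline of the order-of-addition versus degree accounting: making sure the iterated first phase never violates monotonicity and that each representative $z^k_j$ can only involve generators that are already in $M$.
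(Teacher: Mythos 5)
Your proposal is correct and follows the same line as the paper's own (much terser) proof: freeness because no relations are ever imposed, monotonicity of degrees from the order in which the steps run, and the differential of each generator being either zero or a polynomial in strictly earlier generators. Your more careful degree bookkeeping and the observation that $d^2(y^{k-1}_j)=d_M(z^k_j)=0$ (since $z^k_j$ is chosen as a cocycle) are just welcome elaborations of the same argument.
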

\begin{proof}
It is free because we do not add any relation at any moment of the process. The generators are added in increasing order, and the differential of each generator is always either zero or expressed in terms of the previous generators.
\end{proof}

\begin{lemma}
Let $\phi: M \rightarrow A$ be the map obtained after step~\ref{step:x0}. The induced map $H^{k_0}(M) \rightarrow H^{k_0}(A)$ is an isomorphism.
\end{lemma}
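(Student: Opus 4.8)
The plan is to unwind the construction of Subsection~\ref{step:x0} and check directly that $\phi^*$ is a bijection on $H^{k_0}$. First I would describe $M$ at this stage explicitly: it is the free graded commutative algebra on the generators $x^{k_0}_0,\ldots,x^{k_0}_{l_{k_0}}$, all of degree $k_0$ and all with zero differential, so $d_M\equiv 0$ on the subspace they span. Since $k_0\ge 1$, any product of two or more of these generators has degree $\ge 2k_0 > k_0$, hence $M_{k_0}$ is exactly the $\mathbb{Q}$-span of the $x^{k_0}_j$, and $M_{k_0-1}=0$ because $k_0$ was chosen minimal with $H^{k_0}(A)\ne 0$ — in fact $M_m=0$ for $0<m<k_0$ and $M_0=\mathbb{Q}$. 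Therefore $d_M:M_{k_0-1}\to M_{k_0}$ is the zero map and $d_M:M_{k_0}\to M_{k_0+1}$ is zero by construction, so $H^{k_0}(M)=M_{k_0}$, which is free of rank $l_{k_0}+1$ on the classes $[x^{k_0}_j]$.

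Next I would compute the induced map. By definition of $\phi$ in the diagram of Subsection~\ref{step:x0}, $\phi(x^{k_0}_j)=a^{k_0}_j$, where the $a^{k_0}_j$ were chosen as representatives of a basis $[a^{k_0}_0],\ldots,[a^{k_0}_{l_{k_0}}]$ of $H^{k_0}(A)$. Since $d_M(x^{k_0}_j)=0=\phi(0)=d_A(\phi(x^{k_0}_j))$ (the $a^{k_0}_j$ are cocycles because they represent cohomology classes), $\phi$ is a morphism of differential algebras on these generators, and the induced map sends the basis $[x^{k_0}_j]$ of $H^{k_0}(M)$ to the basis $[a^{k_0}_j]$ of $H^{k_0}(A)$. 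A $\mathbb{Q}$-linear map taking a basis to a basis is an isomorphism, which is the claim.

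This argument is almost entirely bookkeeping, so I do not expect a serious obstacle. The one point that deserves a careful line is the identification $H^{k_0}(M)=M_{k_0}$: it rests on $M_{k_0-1}=0$, which in turn uses both $k_0\ge 1$ (so there are no generators in degree $0$ beyond the unit, and products land in degree $\ge 2k_0$) and the minimality of $k_0$ — note that the generators added in Subsection~\ref{step:x0} all have degree exactly $k_0$, so there is genuinely nothing in degrees $1,\ldots,k_0-1$. If one wanted to be fully careful one should also observe that $H^{j}(M)\to H^{j}(A)$ is an isomorphism for $j<k_0$ as well (both sides vanish), which matches the claim made at the end of Subsection~\ref{step:x0}; but for the stated lemma only the degree-$k_0$ case is required.
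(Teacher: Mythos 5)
Your argument is correct and is essentially the paper's own proof, just spelled out in more detail: both rest on the observations that $M^{k_0-1}=0$ and the generators $x^{k_0}_j$ have zero differential, so $H^{k_0}(M)=M_{k_0}$ and $\phi^*$ carries the basis $[x^{k_0}_j]$ to the chosen basis $[a^{k_0}_j]$ of $H^{k_0}(A)$. No discrepancy to report.
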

\begin{proof}
Since each $x^{k_0}_j$ has zero differential, and $M^{k_0-1}=0$, they produce a basis of the cohomology group $H^{k_0}(M)$. It is clear that the map gives a bijection with a basis of $H^{k_0}(A)$.
\end{proof}

\begin{lemma}
Assume that before an iteration of the step~\ref{step:y} the maps
$\phi_m^*$ are bijective for all $m<k$, then they are also bijective after adding each generator $y^{k-1}_j$.
\end{lemma}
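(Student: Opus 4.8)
The plan is to analyse the effect of adjoining the single new generator $y^{k-1}_j$ on the low‑degree cohomology. Write $M$ for the algebra just before the generator is adjoined and $M'=M\otimes\bigwedge(y^{k-1}_j)$ for the algebra just after, with $d_{M'}(y^{k-1}_j)=z^k_j$ and $\phi(y^{k-1}_j)=b^{k-1}_j$. First I would note that $\phi$ does extend to a morphism of differential algebras $M'\to A$: this is precisely what the choice of $b^{k-1}_j$ arranges, since $d_A\big(\phi(y^{k-1}_j)\big)=d_A(b^{k-1}_j)=c^k_j=\phi(z^k_j)=\phi\big(d_{M'}(y^{k-1}_j)\big)$, and $\phi$ is already a morphism on $M$. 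The goal is then to prove $H^m(M')=H^m(M)$ compatibly with $\phi$ for every $m<k$, whence $\phi^*_m$ remains bijective.

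Next comes the degree bookkeeping. Since $|y^{k-1}_j|=k-1\ge 1$ and $M_0=\mathbb{Q}$, every monomial of $M'$ that involves $y^{k-1}_j$ has degree $\ge k-1$, with equality only for $y^{k-1}_j$ itself (when $k\ge 3$, any product of $y^{k-1}_j$ with a positive‑degree element and $(y^{k-1}_j)^2$ both land in degree $\ge k$; when $k=2$ the generator has odd degree, so $(y^{k-1}_j)^2=0$). Hence $M'_m=M_m$ for $m\le k-2$, $M'_{k-1}=M_{k-1}\oplus\mathbb{Q}\,y^{k-1}_j$, and $d_{M'}$ maps the summand $M$ into itself, where it agrees with $d_M$. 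Consequently $B^m(M')=d_{M'}(M'_{m-1})=d_M(M_{m-1})=B^m(M)$ for all $m\le k-1$, and $Z^m(M')=Z^m(M)$ for all $m\le k-2$; so $H^m(M')=H^m(M)$, with the same induced map to $H^m(A)$, for every $m\le k-2$.

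The only remaining case is $m=k-1$. A cocycle of $M'_{k-1}$ has the form $w+\lambda\,y^{k-1}_j$ with $w\in M_{k-1}$, $\lambda\in\mathbb{Q}$, and being closed says $d_M(w)+\lambda z^k_j=0$ in $M_k$. Now $[z^k_j]$ is taken from a basis of $Ker(\phi^*_k)\subseteq H^k(M)$, hence is a nonzero class, so $z^k_j\notin B^k(M)$ and therefore $\lambda=0$ and $w\in Z^{k-1}(M)$. (When several generators $y^{k-1}_0,\dots,y^{k-1}_{l^1_{k-1}}$ are added in a single pass this remains valid: adjoining $y^{k-1}_0,\dots,y^{k-1}_{j-1}$ enlarges the degree‑$k$ coboundaries only by $\mathrm{span}(z^k_0,\dots,z^k_{j-1})$, and the classes $[z^k_0],\dots,[z^k_{l^1_{k-1}}]$ were chosen linearly independent in $H^k(M)$, so $[z^k_j]$ stays nonzero at the moment it is used.) Thus $Z^{k-1}(M')=Z^{k-1}(M)$, and together with $B^{k-1}(M')=B^{k-1}(M)$ this gives $H^{k-1}(M')=H^{k-1}(M)$ compatibly with $\phi$, so $\phi^*_{k-1}$ is still bijective. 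Iterating over the generators added during the step gives the lemma.

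I do not expect a genuine obstacle here: the whole content is the single observation that $z^k_j$ represents a nonzero cohomology class, so adjoining a primitive for it creates no new cocycle in degree $k-1$ and no new coboundary below degree $k$. The only point that needs care is the degree accounting, in particular the use of $k\ge 2$ so that products involving $y^{k-1}_j$ other than $y^{k-1}_j$ itself cannot contribute in degrees $\le k-1$. If one prefers, the same facts can be packaged as the long exact cohomology sequence of $0\to M\to M'\to M'/M\to 0$: in degrees $\le k-1$ the complex $M'/M$ has cohomology only in degree $k-1$, equal to $H^0(M)=\mathbb{Q}$, and the connecting homomorphism $H^{k-1}(M'/M)\to H^k(M)$ sends the generator to $[z^k_j]$, so it is injective exactly when $[z^k_j]\ne 0$, forcing $H^m(M)\to H^m(M')$ to be an isomorphism for all $m<k$.
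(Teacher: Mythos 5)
Your proof is correct and follows essentially the same route as the paper: you check that cocycles and coboundaries in degrees $<k$ are unchanged, the only nontrivial point being that no new cocycle $w+\lambda y^{k-1}_j$ appears in degree $k-1$ because $z^k_j$ represents a nonzero class in $H^k(M)$ and hence is not a coboundary. You are in fact somewhat more careful than the paper, which simply asserts the direct-sum decomposition of $Ker(d^{k-1})$ where you justify it, and you also handle the linear independence of the $z^k_j$ within a single pass explicitly.
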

\begin{proof}
For the groups $H^m(M)$ with $m<k-1$, both the space of cocycles and coboundaries remain untouched.

The addition of $y^{k-1}_l$ produces a decomposition $M_{k-1}\bigoplus\mathbb{Q} y^{k-1}_l$, where $M_{k-1}$ is the space of homogeneous supercommutative polynomials of degree $k-1$ on the previous generators. This induces also a decomposition

$$Ker(d_M^{k-1})=\left(Ker(d_M^{k-1})\cap M_{k-1}\right)\bigoplus\left(Ker(d_M^{k-1})
\cap \mathbb{Q}y^{k-1}_l\right)$$
By construction, $d_M(y^{k-1}_l) \notin d_M(M_{k-1})$, so $\left(Ker(d_M^{k-1})
\cap \mathbb{Q}y^{k-1}_l\right)=0$. Hence at this degree also both cocycles and coboundaries remain untouched.
\end{proof}

\begin{lemma}
If step~\ref{step:y} terminates, the resulting map $\phi:M\rightarrow A$ is a $(k-1)-quasi-isomorphism$.
\end{lemma}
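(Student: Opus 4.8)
The plan is to combine the previous lemmas into a statement about all cohomological degrees simultaneously. First I would record the hypothesis: we entered this instance of step~\ref{step:y} with $\phi_m^*\colon H^m(M)\to H^m(A)$ bijective for all $m<k$ (this holds by the inductive setup of the algorithm, or by the lemma following step~\ref{step:x0} in the base case $k-1=k_0$). The previous lemma shows that adding any single generator $y^{k-1}_j$ preserves bijectivity of $\phi_m^*$ for all $m<k$. Since step~\ref{step:y} consists of finitely many such additions (by the termination hypothesis), an immediate induction on the number of generators added shows that $\phi_m^*$ remains bijective for all $m<k$ throughout and at the end of the step. So the only thing left to check is what happens at degree $k$ itself, namely that $\phi_k^*\colon H^k(M)\to H^k(A)$ is injective once the step terminates.

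Next I would argue the injectivity at degree $k$. By construction, step~\ref{step:y} terminates exactly when $\phi_k^*$ becomes injective: at each iteration we pick a basis $[z^k_0],\dots,[z^k_{l}]$ of $\mathrm{Ker}(\phi_k^*)$, choose representatives $z^k_j$, note that $\phi(z^k_j)=c^k_j$ is a coboundary $d_A(b^{k-1}_j)$ in $A$ (possible precisely because $[z^k_j]\in\mathrm{Ker}(\phi_k^*)$), and adjoin generators $y^{k-1}_j$ with $d_M(y^{k-1}_j)=z^k_j$ and $\phi(y^{k-1}_j)=b^{k-1}_j$. One must check this $\phi$ is still a morphism of differential algebras: the only new condition is $\phi(d_M(y^{k-1}_j))=d_A(\phi(y^{k-1}_j))$, i.e. $\phi(z^k_j)=d_A(b^{k-1}_j)=c^k_j$, which holds by the choice of $b^{k-1}_j$. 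After this addition each $z^k_j$ has become a coboundary in $M$, so the classes we killed are no longer obstructions; the process is designed to be run iteratively until no new kernel elements appear, and the termination hypothesis says this happens after finitely many rounds. At that point $\mathrm{Ker}(\phi_k^*)=0$, i.e. $\phi_k^*$ is injective.

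Finally I would assemble these facts against Definition of $i$-quasi-isomorphism (with $i=k-1$): we need $\phi_m^*$ an isomorphism for all $m\le k-1$ and $\phi_k^*$ a monomorphism. The first part is exactly the persistence of bijectivity for $m<k$ established in the first paragraph, and the second part is the injectivity at degree $k$ from the second paragraph. Hence $\phi\colon M\to A$ is a $(k-1)$-quasi-isomorphism, which is the assertion.

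The main obstacle I anticipate is not any single computation but making the bookkeeping around \emph{termination and iteration} precise: the lemma only controls one generator at a time, and the step as described may reopen $\mathrm{Ker}(\phi_k^*)$ after each batch, so one must be careful to phrase the argument as ``assuming the step terminates, at the terminal stage the kernel is zero and all lower-degree maps are still bijective by repeated application of the previous lemma'' rather than trying to control the intermediate stages in detail. I would also want to state explicitly (perhaps as a one-line remark) that adding $y^{k-1}_j$ never affects $H^m$ for $m<k-1$ and only possibly enlarges the coboundaries at degree $k$, never the cocycles at degree $<k$, so that ``without changing the lower degree cohomologies'' is justified rather than assumed.
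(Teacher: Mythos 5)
Your proof is correct and follows essentially the same route as the paper: the paper's own argument is a one-line appeal to the fact that the step terminates only when $\mathrm{Ker}(\phi^*_k)=0$, leaving the preservation of the lower-degree isomorphisms implicit in the preceding lemma. You simply make that implicit bookkeeping explicit, which is a faithful (and more complete) rendering of the intended argument.
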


\begin{proof}
The step only terminates if the kernel of the induced map in degree $k$ is zero.
\end{proof}

\begin{lemma}
After step~\ref{step:xd}, the induced maps $H^k(M)\rightarrow H^k(A)$ are isomorphisms for $i\leq k$
\end{lemma}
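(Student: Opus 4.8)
The plan is to treat step~\ref{step:xd} with the same bookkeeping used for step~\ref{step:y}: write down an explicit direct-sum splitting of the cochain complex of $M$ and read off the effect on cocycles, coboundaries and on $\phi$. Throughout, let $M'$ denote the algebra just before step~\ref{step:xd} and $M$ the algebra after it, and recall that at this point $\phi^*_m$ is an isomorphism for all $m\le k-1$ and $\phi^*_k$ is injective, by the lemmas already proved for step~\ref{step:y}.

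First I would dispose of the low degrees. Every generator $x^k_1,\dots,x^k_j$ added in step~\ref{step:xd} has degree $k$, and since $M$ is connected all its generators have positive degree; hence these new generators contribute nothing to $M_m$ for $m<k$. So $M_m$, its differential, and $\phi|_{M_m}$ coincide with those of $M'$ for $m<k$, and therefore $\phi^*_m\colon H^m(M)\to H^m(A)$ is unchanged and remains an isomorphism for $m\le k-1$.

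Next, in degree $k$: since the $x^k_i$ have degree $k$ and $d_M(x^k_i)=0$, one has $M_{k-1}=M'_{k-1}$ and $M_k=M'_k\oplus\bigoplus_i\mathbb{Q}\,x^k_i$, with $d_M$ agreeing with $d_{M'}$ on $M'_k$, vanishing on each $x^k_i$, and sending $M_{k-1}$ into $M'_{k+1}\subseteq M_{k+1}$ via an injective inclusion. This gives $Z^k(M)=Z^k(M')\oplus\bigoplus_i\mathbb{Q}\,x^k_i$ and $B^k(M)=B^k(M')$, hence $H^k(M)=H^k(M')\oplus\bigoplus_i\mathbb{Q}\,[x^k_i]$. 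Also $\phi$ remains a morphism of differential algebras: $a^k_i$ is a representative of a class in $H^k(A)$, so $d_A(\phi(x^k_i))=d_A(a^k_i)=0=\phi(d_M(x^k_i))$. Now analyze $\phi^*_k$ on this splitting. On the summand $H^k(M')$ it is the old map, injective with image $\langle[a^k_{j+1}],\dots,[a^k_{l_k}]\rangle$; on the new classes it acts by $[x^k_i]\mapsto[a^k_i]$. Because $[a^k_1],\dots,[a^k_{l_k}]$ is by construction a basis of $H^k(A)$ extending a basis of $\mathrm{Im}(\phi^*_k)$, surjectivity is immediate, and a relation $\phi^*_k(\xi)+\sum_i\lambda_i[a^k_i]=0$ with $\xi\in H^k(M')$ forces $\phi^*_k(\xi)=0$ and all $\lambda_i=0$, whence $\xi=0$ by injectivity on the old summand. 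Thus $\phi^*_k$ is an isomorphism, and together with the previous paragraph $\phi^*_m$ is an isomorphism for every $m\le k$; taken at the final iteration $k=i$, this is exactly what is needed for the $i$-minimal model.

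The one point requiring care is that adjoining generators with zero differential could in principle turn an old cocycle into a coboundary or an old non-cocycle into a cocycle, collapsing lower cohomology. The direct-sum computation above rules this out, and it does so precisely because the $x^k_i$ are introduced only after step~\ref{step:y}: they appear in the differential of no previously added generator, so $B^k$ and $Z^k$ acquire exactly the obvious new pieces and nothing more. I expect this to be the only real obstacle; everything else is routine tracking of the splitting.
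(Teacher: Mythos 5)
Your proof is correct and follows the same idea as the paper's: the map was already injective before step~\ref{step:xd}, and the new closed generators $x^k_1,\dots,x^k_j$ exactly fill out the complement of $\mathrm{Im}(\phi^*_k)$ in $H^k(A)$. The paper states this in one line, while you supply the supporting bookkeeping (the splittings of $Z^k$, $B^k$ and the invariance of lower-degree cohomology), which is a welcome but not essentially different elaboration.
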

\begin{proof}
Consider the map $H^k(M)\rightarrow H^k(A)$. By the previous lemmas, it was injective before step~\ref{step:xd}. The effect of adding the generators $x^k_1,\ldots, x^k_j$ is precisely to extend the vector space $H^k(M)$ with the needed generators to fill $H^k(A)$.
\end{proof}

Joining this results, we get that, if the method terminates at degree $i$, we have obtained an $i$-minimal model for $A$.

\section{Application to formality criteria}

In this section we will see how the previous method can be useful to determine the fomality of a differential algebra.

\begin{definition}
    A differential algebra $A$ is said to be \emph{$i$-formal} if its $i$-minimal model is $i$-quasi-isomorphic to its cohomology algebra $H^*(A)$. Analogously, $A$ is said to be \emph{formal} if it is $i$-formal for every $i\in\mathbb{Z}^+$.
\end{definition}

\begin{remark}
This definition is equivalent to the fact that the $i$-minimal model of $A$ is also an $i$-minimal model of $H^*(A)$.
\end{remark}

The notion of $i$-formality can be found in the literature under this same name. However, some authors call it $k$-stage formality (see \cite{macinic}). It should
be noted that there exist a different notion that also receives the name of
$s$-formality, introduced by Fern\'andez and Mu\~noz in \cite{fernandez}.

We will now see two criteria (one necessary and one sufficient) for $i$-formality that can be computed making use of the method described before.

\subsection{Necessary criterion: numerical invariants}

Let $A$ be a differential algebra. By using the previous method, we can compute an $i$-minimal model $M_A\rightarrow A$. We can also compute a
presentation of the cohomology algebra $H^*(A)$ up to degree $i+1$, and then
its $i$-minimal model $M_H\rightarrow H^*(A)$. By the previous remark, $A$ is $i$-formal if and only if $M_A$ and $M_H$ are isomorphic.

Determinining whether two presentations correspond to isomorphic  algebras or not is, in general, a hard problem. Therefore we will use some numerical invariants that are related to the construction process described in section~\ref{sec:algorithm}, and hence can be computed. In particular, these invariants coincide with the number of generators that are added in each step of the algorithm. Let us now see that these numbers are, in fact, invariants under isomorphism.

Let $M$ be a minimal algebra obtained by the method in section~\ref{sec:algorithm}. Consider the corresponding cochain complex

\[
\begin{tikzcd}
M^0 \arrow[r, "d_0"] & M^1 \arrow[r, "d_1"] &  M^2 \arrow[r, "d_2"] & \ldots
\end{tikzcd}
\]

We will define recursively two families of linear subspaces (denoted $V_j^i$ and $W_j^i$) and a family of subalgebras (denoted by $N_j^i$):

\begin{itemize}
    \item $V_0^0:=\{0\}$.
    \item $N_j^i$ is the subalgebra generated by all the vector spaces $V_k^l$ for $l<i$ or $l=i,k\leq j$.
    \item $N_\infty^i=\bigcup_{k=0}^\infty N_k^i$
    \item $W_0^i:=Ker(d_i)\cap N_\infty^{i-1}$ for $i>0$
    \item $V_0^i:=Ker(d_i)$.
    \item $W_{j+1}^i:=d_i^{-1}(N_{j}^i) \cap N_j^i$ for $i>0,j\geq 0$
    \item $V_{j+1}^i:=d_i^{-1}(N_{j}^i)$ for $i>0,j\geq 0$

\end{itemize}

It is clear that these subalgebras and vector spaces must be preserved by isomorphism of differential algebras, since their definition only involves the algebra structure and the differential.

Now consider the numbers $v_j^i:=dim(V_j^i/W_j^i)$. Let us now see that these numbers are related to the steps followed in the algorithm of section~\ref{sec:algorithm}.

Denote by $M^{k_0}_0$ the algebra obtained after step \ref{step:x0} of the algorithm, $M^i_j$ the algebra obtained after the $j$'th iteration of step~\ref{step:y} in degree $i$, and $M^i_0$ the algebra obtained after step~\ref{step:xd} in degree $i$.

The following diagram summarizes the inclusion $N^i_j\hookrightarrow M$:

\[
\begin{tikzcd}
    \cdots \arrow[r, "d_{i-2}"]  & \left(N^{i}_j\right)^{i-1} \arrow[r, "d_{i-1}"] \arrow[d, "\iota"] &  \left(N^i_j\right)^i\arrow[r, "d_i"]  \arrow[d, "\iota"] &  \left(N^i_j\right)^{i+1}\arrow[r, "d_{i+1}"] \arrow[d, "\iota"]& \ldots \\
\cdots \arrow[r, "d_{i-2}"] & \left(M\right)^{i-1} \arrow[r, "d_{i-1}"] &  \left(M\right)^i\arrow[r, "d_i"] &  \left(M\right)^{i+1}\arrow[r, "d_{i+1}"] & \ldots
\end{tikzcd}
\]

\begin{lemma}
The following properties hold
    \begin{itemize}
     \item $N^i_j=M^i_j$ for every $i,j$.
     \item $\iota^*:H^k\left(N^i_j\right)\rightarrow H^k\left(M\right)$ is surjective if $k\leq i$.
     \item $\left(N^i_j\right)^k=\left(M\right)^k$ if $k<i$.
     \item The numerical invariant $v^i_j$ coincides with the number of generators added in the algorithm in the step that corresponds to $M^i_j$.
    \end{itemize}

\end{lemma}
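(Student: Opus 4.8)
The plan is to establish the four assertions simultaneously, by induction along the well-ordered list of stages $M^{k_0}_0,M^{k_0}_1,\dots,M^{k_0}_\infty,M^{k_0+1}_0,\dots$ produced by the algorithm, showing at each stage that the subalgebra $N^i_j$ of the definition coincides with $M^i_j$. Everything rests on the structural description of a minimal algebra used in the proof that $M$ is minimal: $M$ is free graded-commutative on the generators $\{x^\bullet_\bullet,y^\bullet_\bullet\}$, these are introduced in non-decreasing degree, and $d_M$ maps each $y^k_\bullet$ into the subalgebra generated by strictly earlier generators. Two consequences will be used constantly: each $M^i_j$ is $d_M$-stable, and every generator of degree $<i$ already belongs to $M^i_j$ (such generators are created while processing degrees $\le i$), so that $(M)^m=(M^i_j)^m$ for $m<i$. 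The latter is exactly the third bullet, and it reduces the identity $N^i_j=M^i_j$ to a statement in degree $i$ alone.

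For the base case one checks $N^{k_0}_0=M^{k_0}_0$. Since $M$ has no generator in positive degree below $k_0$, we have $(M)^m=0$ and $V^l_k=0$ for $0<l<k_0$, while $N^l_k=\mathbb Q$; hence $W^{k_0}_0=0$, and $V^{k_0}_0=Ker(d_{k_0})$ is spanned by the generators $x^{k_0}_\bullet$, since there are no decomposables in that degree and the lemma $d_M(y^{k_0}_\bullet)\notin d_M(M_{k_0})$ forbids nonzero $y^{k_0}$-cocycles. Thus $N^{k_0}_0$ is the free algebra on the $x^{k_0}_\bullet$, which is $M^{k_0}_0$, and $v^{k_0}_0$ is their number. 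The inductive step splits according to the two phases of the algorithm.

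Suppose $N^i_j=M^i_j$ and the next move is an iteration of step~\ref{step:y} adding $y^i_0,\dots,y^i_l$ with $d_M(y^i_s)=z^{i+1}_s\in M^i_j$. Because $M^i_j$ is $d_M$-stable, $V^i_{j+1}=d_i^{-1}(N^i_j)$ contains $(M^i_j)^i$ and each $y^i_s$. The heart of the matter is the reverse inclusion. In degree $i$ there are no decomposables outside $M^i_j$, so $(M)^i=(M^i_j)^i\oplus\mathbb Q\langle\text{all }y^i\text{ not yet in }M^i_j\rangle$; writing $w\in d_i^{-1}(M^i_j)$ as $w=w_0+\sum_s c_s y^i_s+r$ with $w_0\in(M^i_j)^i$ and $r$ a combination of $y^i$-generators of later batches, the condition $d_M(w)-d_M(w_0)\in M^i_j$ forces $d_M(r)$ to be a coboundary of $(M^i_j)^i$ plus a combination of the $z^{i+1}_s$; since the differentials of the later $y$'s were chosen, batch by batch, to represent classes nontrivial in the cohomology of the stage at which they were introduced, this is impossible unless $r=0$. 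Hence $V^i_{j+1}=(M^i_j)^i\oplus\mathbb Q\langle y^i_0,\dots,y^i_l\rangle$, $W^i_{j+1}=(M^i_j)^i$ (again by $d_M(y^i_s)\notin d_M((M^i_j)^i)$), so $v^i_{j+1}=l+1$ and $N^i_{j+1}$, generated by $M^i_j$ together with the new $y$'s, equals $M^i_{j+1}$.

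The second case is the passage from $N^i_\infty=M^i_\infty$ to $N^{i+1}_0$, corresponding to step~\ref{step:xd} in degree $i+1$ (and to step~\ref{step:x0} when $i+1=k_0$), which adds $x^{i+1}_1,\dots,x^{i+1}_j$. Here $V^{i+1}_0=Ker(d_{i+1})$ and $W^{i+1}_0=Ker(d_{i+1})\cap M^i_\infty$. Using $(M)^{i+1}=(M^i_\infty)^{i+1}\oplus\mathbb Q\langle x^{i+1}_\bullet\rangle\oplus\mathbb Q\langle y^{i+1}_\bullet\rangle$ (all decomposables of that degree lie in $M^i_\infty$, since every generator of degree $\le i$ does) together with the same linear independence of the chosen differentials, a degree-$(i+1)$ cocycle has zero $y^{i+1}$-component, hence lies in $\mathbb Q\langle x^{i+1}_\bullet\rangle\oplus(Ker(d_{i+1})\cap M^i_\infty)$. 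Therefore $v^{i+1}_0=j$ and $N^{i+1}_0$, generated by $M^i_\infty$ and the $x^{i+1}_\bullet$, equals $M^{i+1}_0$, with $(M^{i+1}_0)^m=(M)^m$ for $m<i+1$ as above. Finally the surjectivity of $\iota^*\colon H^k(N^i_j)\to H^k(M)$ for $k\le i$ follows: for $k<i$ the two complexes agree through degree $k$, and in degree $i$ the cancellation argument shows that every cocycle of $M$ already lies in $M^i_j$, so every class of $H^i(M)$ is hit. The only real difficulty in the whole argument is precisely this cancellation — deciding which degree-$i$ (resp. degree-$(i+1)$) elements of the ambient $M$ have differential inside the current stage — which is handled by propagating, through the successive iterations of step~\ref{step:y}, the filtered linear independence of the differentials $d_M(y^{k-1}_\bullet)$ guaranteed by the earlier lemmas.
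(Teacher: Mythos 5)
Your overall strategy is exactly the one the paper invokes (and does not carry out): a simultaneous induction over the well-ordered list of stages $M^{k_0}_0,\dots,M^i_j,\dots$, reducing everything to the identification of $V^i_{j+1}$ with $(M^i_j)^i\oplus\mathbb{Q}\langle\text{batch-}(j{+}1)\ y^i\text{'s}\rangle$ and of $V^i_0/W^i_0$ with the span of the $x^i$'s. You also correctly isolate the crux: controlling which combinations of later $y^i$-generators can have differential landing in the current stage. That is the right skeleton, and it is considerably more than the paper provides.

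There is, however, one step that does not follow as written. From $d_M(w)-d_M(w_0)\in (M^i_j)^{i+1}$ you conclude that $d_M(r)$ is ``a coboundary of $(M^i_j)^i$ plus a combination of the $z^{i+1}_s$.'' Membership of $d_M(r)$ in $(M^i_j)^{i+1}$ only tells you that $d_M(r)$ is a \emph{cocycle} of the subcomplex $M^i_j$; a priori its class in $H^{i+1}(M^i_j)$ could be nonzero and lie outside $\langle[z^{i+1}_0],\dots,[z^{i+1}_{l}]\rangle$, since $r\notin M^i_j$ and exactness in $M$ does not immediately give exactness in $M^i_j$. The missing input is the map to $A$: since $\phi(d_M(r))=d_A(\phi(r))$ is exact in $A$, the class $[d_M(r)]$ lies in $Ker(\phi^*_{i+1})$ computed on $H^{i+1}(M^i_j)$, and the algorithm chose the $[z^{i+1}_s]$ of batch $j+1$ precisely as a \emph{basis} of that kernel; only then does $d_M(r)=d_M(m)+\sum_s b_s z^{i+1}_s$ follow. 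From there one finishes as you indicate: $r-m-\sum_s b_s y^{i-1\to i}_s$ is a cocycle of degree $i$, and the downward induction on batches (using that each batch's $[z]$'s are linearly independent in the cohomology of the preceding stage) shows cocycles of $M^i$ have no $y$-component, forcing $r=0$ and $b_s=0$. So the argument is repairable with material you already have on the table, but the repair genuinely uses $\phi$ and the ``basis of $Ker(\phi^*)$'' property, not merely the nontriviality of the classes $[d_M(y)]$ at the stage of their introduction; you should make that dependence explicit, since the same point is what makes your surjectivity claim for $\iota^*$ in degree $i$ go through.
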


The proof can be done by induction over the steps of the algorithm.

%
%

%

\begin{corollary}
 If $A$ is formal, the numerical invariants $v_j^i$ of $M_A$ coincide with the ones of $M_H$.
\end{corollary}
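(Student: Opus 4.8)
The plan is to deduce this corollary directly from the preceding lemma together with the remark characterizing $i$-formality. First I would recall that, by the remark, $A$ being formal means that for every $i$ the $i$-minimal model $M_A$ is isomorphic (as a differential algebra) to the $i$-minimal model $M_H$ of $H^*(A)$. Fix such an $i$ and an isomorphism $\psi\colon M_A\to M_H$ of differential algebras. The key observation, already noted in the paragraph defining the $V^i_j$ and $W^i_j$, is that the subalgebras $N^i_j$ and the subspaces $V^i_j$, $W^i_j$ are defined purely in terms of the graded algebra structure and the differential: they are built recursively from kernels of the $d_i$, preimages $d_i^{-1}(N^i_j)$, intersections, and subalgebras generated by previously constructed pieces. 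Hence $\psi$ carries $V^i_j(M_A)$ onto $V^i_j(M_H)$ and $W^i_j(M_A)$ onto $W^i_j(M_H)$, and therefore induces an isomorphism of quotient vector spaces $V^i_j(M_A)/W^i_j(M_A)\cong V^i_j(M_H)/W^i_j(M_H)$.

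From this isomorphism of quotients I would conclude $\dim\bigl(V^i_j(M_A)/W^i_j(M_A)\bigr)=\dim\bigl(V^i_j(M_H)/W^i_j(M_H)\bigr)$, i.e. $v^i_j(M_A)=v^i_j(M_H)$ for all $i,j$, which is exactly the claim. Note that the last bullet of the preceding lemma, identifying $v^i_j$ with the number of generators added at the corresponding step, is not strictly needed for the proof of the corollary itself — it is what gives the invariants their computational meaning — but I would still invoke it to phrase the conclusion in the way the subsequent formality criterion will use, namely that the generator counts produced by running the algorithm on $A$ and on $H^*(A)$ must agree step by step.

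The only genuinely delicate point, and the one I would be careful to spell out, is that the recursive definitions of $V^i_j$, $W^i_j$, $N^i_j$ are stable under isomorphism; this requires checking that each clause in the recursion is phrased in isomorphism-invariant terms. Concretely one argues by induction on the well-ordered index $(i,j)$: assuming $\psi(N^l_k)=N^l_k$ for all $(l,k)$ preceding $(i,j)$, the subalgebra $N^i_j$ is generated by the images under $\psi$ of those $V^l_k$, so $\psi(N^i_j)=N^i_j$; and since $\psi$ commutes with $d$ and preserves degree, $\psi\bigl(d_i^{-1}(N^i_{j-1})\bigr)=d_i^{-1}(\psi(N^i_{j-1}))=d_i^{-1}(N^i_{j-1})$ and likewise for kernels and intersections, giving $\psi(V^i_j)=V^i_j$ and $\psi(W^i_j)=W^i_j$. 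I do not expect any real obstacle beyond carefully organizing this induction, since the heavy lifting — relating the $v^i_j$ to the algorithm and establishing the isomorphism $M_A\cong M_H$ under formality — has already been done in the preceding lemma and remark.
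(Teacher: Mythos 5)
Your argument is correct and is exactly the one the paper intends: the corollary is left without an explicit proof precisely because it follows from the remark (formality gives $M_A\cong M_H$ by uniqueness of minimal models) together with the observation, stated just after the definitions of $V^i_j$, $W^i_j$, $N^i_j$, that these are preserved by any isomorphism of differential algebras. Your careful induction making that invariance explicit is a welcome elaboration but does not depart from the paper's approach.
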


\subsection{Sufficient criterion}

As before, let $A$ be a differential algebra, and $M_A$ the minimal model obtained by the previous method. Consider the corresponding cohomology algebra $H^*(A)\cong H^*(M_A)$. We can compute its minimal model $M_H$. Assume that the numerical invariants of $M_A$ and $M_H$ coincide.

Consider the diagram followed to compute $M_A$:

\[
 \begin{array}{ccccc}
a^{k_0}_0  & \leftarrow &  x^{k_0}_0 & \rightarrow &  0 \\
& & \vdots\\
a^{k_0}_{l_{k_0}}& \leftarrow &  x^{k_0}_{l_{k_0}} & \rightarrow &  0 \\
& & \vdots \\
b^{k-1}_0& \leftarrow &  y^{k-1}_0 & \rightarrow &  z^k_0 \\
& & \vdots \\
b^{k-1}_{l^1_{k-1}}& \leftarrow &  y^{k-1}_{l^1_{k-1}} & \rightarrow &  z^k_{l^1_{k-1}} \\
b^{k-1}_{l^1_{k-1}+1}& \leftarrow &  y^{k-1}_{l^1_{k-1}+1} & \rightarrow &  z^k_{l^1_{k-1}+1} \\
& & \vdots \\
b^{k-1}_{l^1_{k-1}+l^2_{k-1}}& \leftarrow &  y^{k-1}_{l^1_{k-1}+l^2_{k-1}} & \rightarrow &  z^k_{l^1_{k-1}+l^2_{k-1}} \\
& & \vdots \\
a^{k}_0  & \leftarrow &  x^{k}_0 & \rightarrow &  0 \\
& & \vdots \\
a^{k}_{l_k}& \leftarrow &  x^{k}_{l_k} & \rightarrow &  0 \\
& & \vdots
 \end{array}
\]

We define the morphism of algebras

$$
\begin{array}{cccc}
\psi: & M_A & \rightarrow & H^*(M_A) \\
& x^i_j & \rightarrow & [x^i_j] \\
& y^i_j & \rightarrow & 0
\end{array}
$$


\begin{definition}
    We will say that $M_A$ satisfies the \emph{$\psi$-condition} if $\psi(z^k_j)=0$ for every $z^k_j$.

    Analogously, we say that $M_A$ satisfies the \emph{$\psi$-condition up to degree $i$} if $\psi(z^k_j)=0$ for every $z^k_j$ with $k\leq i+1$.
\end{definition}

Note that the previous definition is equivalent to asking that $\psi$ is a morphism of \emph{differential} algebras $\psi:M_A\to H^*(M_A)$.

\begin{lemma}
 If $M$ satisfies the $\psi$-condition up to degree $i$, and the numerical invariants of $M_A$ coincide, up to degree $i$, with the ones of $M_H$, then $\psi$ is a $i$-quasi-isomorphism.
\end{lemma}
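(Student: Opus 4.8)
The plan is to show that the map $\psi\colon M_A\to H^*(M_A)$, which by the $\psi$-condition up to degree $i$ is genuinely a morphism of differential algebras (with the zero differential on the target), induces isomorphisms $\psi^*_k\colon H^k(M_A)\to H^k(H^*(M_A))=H^k(M_A)$ for $k\le i$ and an injection for $k=i+1$. The key observation is that $\psi$ is a retraction-like map: on the generators $x^i_j$ it sends $x^i_j\mapsto[x^i_j]$, and the classes $[x^i_j]$ are exactly the ones used by the algorithm of Section~\ref{sec:algorithm} when it is run on $H^*(A)$ to build $M_H$. So the first step is to recognize that $\psi$, followed by a choice of representatives, is essentially a comparison map between the two invocations of the algorithm: one producing $M_A\to A$ and one producing $M_H\to H^*(A)$.

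First I would argue injectivity of $\psi^*_k$ for all $k\le i+1$. A cohomology class in $H^k(M_A)$ is represented by a $d_M$-cocycle $\omega$ which is a polynomial in the $x$'s and $y$'s; write $\omega=\omega_0+\omega_1$ where $\omega_0$ uses only $x$-generators (of degrees $\le i$) and $\omega_1$ is in the ideal generated by the $y$'s. Then $\psi(\omega)=\psi(\omega_0)$ modulo the relations in $H^*$, and if $\psi(\omega)=0$ in $H^*(M_A)$ then $\omega_0$ maps to zero, i.e.\ $\omega_0$ lies in the ideal of relations of the cohomology algebra. Using that the relations of $H^*(M_A)$ are precisely the things that get killed by introducing $y$-generators (this is where the coincidence of numerical invariants $v^i_j$ enters — it guarantees that $M_A$ has exactly as many $y$'s in each step as $M_H$ does, so no ``extra'' cocycles appear), one rewrites $\omega_0$ modulo $d_M$ of a combination of the $y$-generators, absorbing it into $\omega_1$-type terms, and iterates down the degree filtration to conclude $[\omega]=0$. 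Surjectivity of $\psi^*_k$ for $k\le i$ is then a dimension count: by the previous Lemma relating $v^i_j$ to the algorithm steps, $\dim H^k(M_A)$ and $\dim H^k(H^*(M_A))$ are computed from the same numerical data, so an injection between equal finite-dimensional spaces is an isomorphism. One must also check $H^0$: both sides are $\mathbb{Q}$ since $A$ is connected.

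I would organize the induction on the degree, exactly paralleling the structure of the algorithm: assume $\psi$ is an $(k-1)$-quasi-isomorphism and that the numerical invariants agree up to degree $k$, then run through step~\ref{step:y} and step~\ref{step:xd} in degree $k$. In step~\ref{step:y}, each $y^{k-1}_j$ added to $M_A$ kills a class $[z^k_j]\in\ker\phi^*_k$; the $\psi$-condition says $\psi(z^k_j)=0$, which means the corresponding relation is already present (or forced) in $H^*(M_A)$, so the same generator is added when building $M_H$ — hence the kernels match and injectivity is preserved on both sides simultaneously. In step~\ref{step:xd}, the $x^k_j$'s added to $M_A$ correspond under $\psi$ to new cohomology classes $[x^k_j]$, and the matching of invariants $v^k_0$ forces the same number of $x$-generators in $M_H$; thus surjectivity is achieved in lockstep. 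Finally, the last pass of step~\ref{step:y} in degree $i+1$ (which the algorithm performs to secure injectivity of $\phi^*_{i+1}$) is governed by the $\psi$-condition at $k=i+1$, giving the required monomorphism $\psi^*_{i+1}$.

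The main obstacle I expect is the bookkeeping in the injectivity argument: making precise the claim that every relation among the classes $[x^i_j]$ in $H^*(M_A)$ is realized, degree by degree, as $d_M$ of some explicit $y$-generator, without accumulating terms that would require an extra generator not accounted for by the numerical invariants. This is really the heart of why the hypothesis ``the numerical invariants of $M_A$ coincide with those of $M_H$'' is needed: it rules out the pathology where $M_A$ has \emph{more} structure (more relations to kill, hence more $y$'s) than the formal model would, which would break surjectivity of $\psi^*$. I would handle this by a careful filtration of $M_A$ by the subalgebras $N^i_j$ introduced before the previous Lemma, using $(N^i_j)^k=(M)^k$ for $k<i$ and the surjectivity of $\iota^*$ in degrees $\le i$ to reduce every cocycle to its ``$x$-part'' modulo controlled coboundaries, and then invoking the equality $N^i_j=M^i_j$ together with the invariant count to close the induction.
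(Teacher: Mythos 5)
Your proposal is correct and, in its operative part (the induction over the steps of the algorithm), follows essentially the same route as the paper: one shows that the very diagram defining $M_A$, with $\psi$ in the role of $\phi$ (images $[x^k_j]$ for the $x$-generators and $0$ for the $y$-generators), is a valid run of the algorithm on $H^*(M_A)$, the $\psi$-condition guaranteeing that the classes $[z^k_j]$ lie in $\ker(\psi^*_k)$ with zero as admissible preimages, and the equality of numerical invariants guaranteeing that they exhaust that kernel and that the counts of added generators match at every stage. The direct cocycle decomposition sketched in your first two paragraphs is not needed once this induction is in place.
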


\begin{proof}
We will see it by proving that the minimal model of $H^*(M)$ can be computed using the diagram

\begin{equation}
\label{diagram:psicondition}
 \begin{array}{ccccc}
     \left[ x^{k_0}_0 \right]   & \leftarrow & x^{k_0}_0 & \rightarrow & 0 \\
& & \vdots & & \\
\left[ x^{k_0}_{l_{k_0}} \right] & \leftarrow & x^{k_0}_{l_{k_0}} & \rightarrow & 0 \\
& & \vdots \\
0 & \leftarrow & y^{k-1}_0 & \rightarrow  & z^k_0 \\
& & \vdots \\
0 & \leftarrow & y^{k-1}_{l^1_{k-1}} & \rightarrow & z^k_{l^1_{k-1}} \\
0 & \leftarrow & y^{k-1}_{l^1_{k-1}+1} & \rightarrow & z^k_{l^1_{k-1}+1} \\
& & \vdots & & \\
0 & \leftarrow & y^{k-1}_{l^1_{k-1}+l^2_{k-1}} & \rightarrow & z^k_{l^1_{k-1}+l^2_{k-1}} \\
& & \vdots \\
\left[ x^{k}_0 \right] & \leftarrow & x^{k}_0 & \rightarrow & 0 \\
& & \vdots \\
\left[ x^{k}_{l_k} \right] & \leftarrow & x^{k}_{l_k} & \rightarrow & 0 \\
& & \vdots
 \end{array}
\end{equation}

The proof will be done by induction on the steps of the algorithm in~\ref{sec:algorithm}.

Note that the left half of the diagram is in fact the map $\psi$. That is, in this case, $\psi$ will play the role of $\phi$.

In step~\ref{step:x0} we have to choose a basis of the first nontrivial cohomology group of $H^*(M)$. Since $H^*(M)$ is itself the cohomology algebra of $M$, its cohomology is isomorphic to itself. So we can choose
$[x^{k_0}_0],\ldots,[x^{k_0}_{l_{k_0}}]$ as the basis of its first nonzero cohomology group. Hence, we can start the construction of the minimal model of $H^*(M)$ with the diagram

\[
 \begin{array}{ccccc}
     \left[ x^{k_0}_0 \right]   & \leftarrow & x^{k_0}_0 & \rightarrow & 0 \\
& & \vdots & & \\
\left[ x^{k_0}_{l_{k_0}} \right] & \leftarrow & x^{k_0}_{l_{k_0}} & \rightarrow & 0
\end{array}
\]

Now, for each iteration of step~\ref{step:y}. By induction hypothesis, assume that the diagram used up to this step coincides with diagram~\eqref{diagram:psicondition}. To proceed with the iteration of the step, we need to choose a basis of $Ker(\psi^*_k)$. Since the numerical invariants coincide, the dimension of this basis has to coincide with the number of generators added in this step of the construction of $M$. The cohomology classes of $z^k_0,\ldots z^k_{l^1_{k-1}}$ do live in $Ker(\psi^*_k)$ because of the $\psi$-condition. And they are linearly independent on the cohomology of $M_A$ because of the induction hypothesis (the cohomology of $M_A$ up to this point has to coincide with the one of $M_H$). So the same $z^k_0,\ldots z^k_{l^1_{k-1}}$ that were used for constructing $M_A$ can be chosen to construct $M_H$.

For the left part of the diagram, we have to choose preimages by the differential of $\psi(z^k_0),\ldots \psi(z^k_{l^1_{k-1}})$. Since all these elements are zero, we can choose zero as its preimages, so we can extend the diagram with

\[
\begin{array}{ccccc}
    0 & \leftarrow & y^{k-1}_0 & \rightarrow & z^k_{0} \\
    & & \vdots \\
    0 & \leftarrow & y^{k-1}_{l^1_{k-1}} & \rightarrow & z^k_{l^1_{k-1}}
\end{array}
\]

For the steps~\ref{step:xd}, again the condition on the numerical invariants tells us that we have to add the same number of generators to the diagram. As before, the classes $[x^k_1]\ldots,[x^k_j]$ are linearly independent, in $H^k(M_A)$, and are not in the image of the previously added generators by $\psi$, so they are a suitable choice for this step.

\end{proof}

Thanks to the previous lemmae, given a GCDA $A$, we have an effective method to determine if $A$ is $i$-formal or not. It consists on the following:
\begin{itemize}
 \item Compute a presentation of $M_A$, the $i$-minimal model of $A$. During the construction, we get the numerical invariants of $M$.
 \item Compute a presentation of $H^*(M_A)$ up to degree $i+1$.
 \item Compute a presentation of $M_H$, the $i$-minimal model of $H^*(M_A)$. During the construction, we get the numerical invariants of $M_H$.
 \item If the numerical invariants of $M_A$ and $M_H$ do not coincide, $A$ is not $i$-formal.
\item If the numerical invariants do coincide, check if the presentation of $M_A$ satisfies the $\psi$-condition. If it does, $A$ is $i$-formal.
\end{itemize}

In every case that we have tested, this criterion has been able to determine if
a GCDA is $i$-formal or not. This fact motivates the following questions:

\begin{question}
 Is the equality of numerical invariants up to degree $i$ a sufficient condition for $i$-formality?
\end{question}

\begin{question}
 Is the $\psi$-condition up to degree $i$ necessary for $i$-formality?
\end{question}

\section{Implementation and examples}


We present an implementation of the previous algorithms in SageMath (\cite{sage}). The computation of the minimal models and cohomology algebras have been already included since version 8.8. The formality criterion is currently under development but the medium term goal is to include it in future SageMath versions. The current code is tracked by the following ticket: \texttt{https://trac.sagemath.org/ticket/28155}.

We now illustrate it with an example.

\subsection{Examples of computations of minimal models with SageMath}

\begin{exmp}

    In SageMath we can define the differential algebra in Example~\ref{exm:algorithm} as follows:

\begin{verbatim}
sage: A.<e1,e2,e3,e4,e5,e6,e7> = GradedCommutativeAlgebra(QQ,
degrees=[1,1,1,1,1,1,2])
sage: B = A.cdg_algebra({e1:-e1*e6,e2:-e2*e6,e3:-e3*e6,e4:-e5*e6})
\end{verbatim}
Its $4$-minimal model can be computed by
\begin{verbatim}
sage: phi = B.minimal_model(4)
sage: phi
Commutative Differential Graded Algebra morphism:
  From: Commutative Differential Graded Algebra with generators (
  'x1_0', 'x1_1', 'y1_0', 'x2_0') in degrees (1, 1, 1, 2) over
  Rational Field with
differential:
   x1_0 --> 0
   x1_1 --> 0
   y1_0 --> x1_0*x1_1
   x2_0 --> 0
  To:   Commutative Differential Graded Algebra with generators (
  'e1', 'e2', 'e3', 'e4', 'e5', 'e6', 'e7') in degrees (1, 1, 1, 1,
  1, 1, 2) over Rational Field with differential:
   e1 --> -e1*e6
   e2 --> -e2*e6
   e3 --> -e3*e6
   e4 --> -e5*e6
   e5 --> 0
   e6 --> 0
   e7 --> 0
  Defn: (x1_0, x1_1, y1_0, x2_0) --> (e6, e5, e4, e7)
\end{verbatim}

Notice that the result is given as a differential algebra morphism from the $i$-minimal model to the algebra given as input. That is, we get not only an abstract description of the $i$-minimal model, but also an explicit $i$-quasi-isomorphism. We can get the $i$-minimal model itself as the domain of the morphism:

\begin{verbatim}
sage: phi.domain()
Commutative Differential Graded Algebra with generators ('x1_0',
'x1_1', 'y1_0', 'x2_0') in degrees (1, 1, 1, 2) over Rational Field
with differential:
   x1_0 --> 0
   x1_1 --> 0
   y1_0 --> x1_0*x1_1
   x2_0 --> 0
\end{verbatim}

Note that here we see right part of the diagram~\eqref{diagram:ejemplo}, whereas in the line
\begin{verbatim}
  Defn: (x1_0, x1_1, y1_0, x2_0) --> (e6, e5, e4, e7)
\end{verbatim}
we see the left part.

%
%
%

\end{exmp}

\begin{exmp}
We can also work with non-free algebras. They must be introduced as the quotient
of a free algebra by a bilateral ideal. For instance, the cohomology algebra of
$\mathbb{S}^2\vee\mathbb{S}^3$ has only elements in degrees $2$ and $3$.

\begin{verbatim}
sage: A.<e2,e3> = GradedCommutativeAlgebra(QQ, degrees=[2,3])
sage: I = A.ideal([e2^2, e2*e3])
sage: Q = A.quotient(I)
sage: Q
Graded Commutative Algebra with generators ('e2', 'e3') in degrees
(2, 3) with relations [e2^2, e2*e3] over Rational Field
\end{verbatim}
We can check that this algebra only has elements in degrees $2$ and $3$:
\begin{verbatim}
sage: Q.basis(2)
[e2]
sage: Q.basis(3)
[e3]
sage: Q.basis(4)
[]
sage: Q.basis(5)
[]
\end{verbatim}
Now we define its corresponding GCDA with trivial differential, and compute its
$6$-minimal model.
\begin{verbatim}
sage: B = Q.cdg_algebra({})
sage: B.minimal_model(6)
Commutative Differential Graded Algebra morphism:
  From: Commutative Differential Graded Algebra with generators (
  'x2_0', 'x3_0', 'y3_0', 'y4_0', 'y5_0', 'y6_0', 'y6_1') in degrees
  (2, 3, 3, 4, 5, 6, 6) over Rational Field with differential:
   x2_0 --> 0
   x3_0 --> 0
   y3_0 --> x2_0^2
   y4_0 --> x2_0*x3_0
   y5_0 --> x3_0*y3_0 + x2_0*y4_0
   y6_0 --> -y3_0*y4_0 + x2_0*y5_0
   y6_1 --> x3_0*y4_0
  To:   Commutative Differential Graded Algebra with generators (
  'e2', 'e3') in degrees (2, 3) with relations [e2^2, e2*e3] over
  Rational Field with differential:
   e2 --> 0
   e3 --> 0
  Defn: (x2_0, x3_0, y3_0, y4_0, y5_0, y6_0, y6_1) --> (e2, e3, 0, 0, 0, 0, 0)

\end{verbatim}

\end{exmp}

\subsection{Examples of formality criteria with SageMath}

In \cite{bock}, Bock studied the formality of solvmanifolds up to dimension $6$.
In the following we show some examples that were not covered there.

\begin{exmp}
The algebra $G^0_{5.14}$ in \cite{bock} is not formal (in fact, not even $2$-formal):
\begin{verbatim}
sage: A.<x1,x2,x3,x4,x5> = GradedCommutativeAlgebra(QQ)
sage: B = A.cdg_algebra({x1:-x2*x5,x4:x3*x5,x3:-x4*x5})
sage: B.is_formal(2)
False
\end{verbatim}

Indeed, we can look at the $3$-minimal model:

\begin{verbatim}
sage: B.minimal_model(3).domain()
Commutative Differential Graded Algebra with generators (
'x1_0', 'x1_1', 'y1_0', 'x2_0', 'y3_0') in degrees (1, 1,
1, 2, 3) over Rational Field with differential:
   x1_0 --> 0
   x1_1 --> 0
   y1_0 --> x1_0*x1_1
   x2_0 --> 0
   y3_0 --> x2_0^2
\end{verbatim}
We can see that the nonzero numerical invariants are $v^1_1=1$,  $v^2_0=1$ and $v^3_1=1$.

If we try to compute the $2$-minimal model of the cohomology algebra, we get:

\begin{verbatim}
sage: H = B.cohomology_algebra(3)
sage: H
Commutative Differential Graded Algebra with generators (
'x0', 'x1', 'x2', 'x3', 'x4') in degrees (1, 1, 2, 2, 2)
with relations [x0*x1, x0*x2, x1*x2 + x0*x4, x1*x4] over
Rational Field with differential:
   x0 --> 0
   x1 --> 0
   x2 --> 0
   x3 --> 0
   x4 --> 0
sage: H.minimal_model(2)
...
ValueError: could not cover all relations in max iterations in degree
2
\end{verbatim}

This means that the algorithm did not finish after $3$ iterations of step~\ref{step:y} in degree $1$ ($3$ is the default value to decide to give up). This implies that $M_H$ has more than $3$ nonzero numerical invariants $v^1_1,v^1_2,v^1_3$ (in fact, it can be checked that for $M_H$, the first numerical
invariants are $v^1_0=2$ ,$v^1_1=1$,$ v^1_2=2$ and $v^1_3=3$), and hence it cannot be isomorphic to $M_A$.
\end{exmp}

\begin{exmp}
    The case $G^{-2,0}_{5.35}$ in \cite{bock} is $6$-formal:
    \begin{verbatim}
sage: A.<x1,x2,x3,x4,x5> = GradedCommutativeAlgebra(QQ)
sage: B = A.cdg_algebra({x1:2*x1*x4,x2:-x2*x4-x3*x5,x3:-x3*x4+x2*x5})
sage: B.is_formal(6)
True
    \end{verbatim}

We can actually see that it is indeed formal. Since the algebra is generated by $5$ generators of degree $1$, it is trivial beyond degree $5$. We can see that its $5$-minimal model is also trivial beyond degree $5$ (moreover, it coincides with its cohomology algebra):

\begin{verbatim}
sage: B.minimal_model(5)
Commutative Differential Graded Algebra morphism:
  From: Commutative Differential Graded Algebra with generators (
  'x1_0', 'x1_1', 'x3_0') in degrees (1, 1, 3) over Rational Field
  with differential:
   x1_0 --> 0
   x1_1 --> 0
   x3_0 --> 0
  To:   Commutative Differential Graded Algebra with generators (
  'x1', 'x2', 'x3', 'x4', 'x5') in degrees (1, 1, 1, 1, 1) over
  Rational Field with differential:
   x1 --> 2*x1*x4
   x2 --> -x2*x4 - x3*x5
   x3 --> -x3*x4 + x2*x5
   x4 --> 0
   x5 --> 0
  Defn: (x1_0, x1_1, x3_0) --> (x4, x5, x1*x2*x3)
\end{verbatim}
So the $5$-minimal model is in fact the minimal model. And it is trivially isomorphic to the cohomology algebra.

\end{exmp}

\end{document}